\begin{document}
\title[Hermite-Hadamard, Hermite-Hadamard-Fej\'{e}r, Dragomir-Agarwal and ...]
{Hermite-Hadamard, Hermite-Hadamard-Fej\'{e}r, Dragomir-Agarwal
and Pachpatte type inequalities for convex functions via new
fractional integrals}

\author[B.~Ahmad, A.~Alsaedi, M.~Kirane and
B.\,T.~Torebek\hfil \hfilneg] {Bashir~Ahmad, Ahmed~Alsaedi, Mokhtar~Kirane and
Berikbol~T.~Torebek}  

\address{Bashir Ahmad \newline
NAAM Research Group, Department of Mathematics, \newline Faculty of Science, King Abdulaziz University, \newline P.O. Box 80203, Jeddah 21589, Saudi Arabia}
\email{bashirahmad\_qau@yahoo.com}

\address{Ahmed Alsaedi \newline
NAAM Research Group, Department of Mathematics, \newline Faculty of Science, King Abdulaziz University, \newline P.O. Box 80203, Jeddah 21589, Saudi Arabia}
\email{aalsaedi@hotmail.com}

\address{Mokhtar Kirane \newline
LaSIE, Facult\'{e} des Sciences, \newline Pole Sciences et Technologies, Universit\'{e} de La Rochelle, \newline Avenue M. Crepeau, 17042 La Rochelle Cedex, France \newline NAAM Research Group, Department of Mathematics, \newline Faculty of Science, King Abdulaziz University, \newline P.O. Box 80203, Jeddah 21589, Saudi Arabia}
\email{mkirane@univ-lr.fr}

\address{Berikbol T. Torebek \newline
Al--Farabi Kazakh National University \newline
al--Farabi ave. 71, 050040, Almaty, Kazakhstan \newline Institute of
Mathematics and Mathematical Modeling.\newline 125 Pushkin str.,
050010 Almaty, Kazakhistan} \email{torebek@math.kz}

\subjclass[2000]{26A33; 26D10} \keywords{Hermite-Hadamard inequality; Hermite-Hadamard-Fej\'{e}r inequality; Dragomir-Agarwal inequality; Pachpatte inequalities; new fractional integral operator. }
\begin{abstract}
The aim of this paper is to establish Hermite-Hadamard, Hermite-Hadamard-Fej\'{e}r, Dragomir-Agarwal and Pachpatte type inequalities for new fractional integral operators with exponential kernel. These results allow us to obtain a new class of functional inequalities which generalizes known inequalities involving convex functions. Furthermore, the obtained results may act as a useful source of inspiration for future research in convex analysis and related optimization fields. \end{abstract}

\maketitle \numberwithin{equation}{section}
\newtheorem{theorem}{Theorem}[section]
\newtheorem{corollary}[theorem]{Corollary}
\newtheorem{lemma}[theorem]{Lemma}
\newtheorem{remark}[theorem]{Remark}
\newtheorem{problem}[theorem]{Problem}
\newtheorem{example}[theorem]{Example}
\newtheorem{definition}[theorem]{Definition}
\allowdisplaybreaks

\section{Introduction}

The inequalities for convex functions due to Hermite and Hadamard are found to be of great importance, for example, see \cite{DP00, PPT92}. According to the inequalities \cite{H1893, H1883},  \begin{itemize}
 \item if $u:\,I\rightarrow \mathbb{R}$ is a convex function on the interval $I\subset \mathbb{R}$ and $a,b\in I$ with $b>a,$ then
\begin{equation}\label{1.1}u\left(\frac{a+b}{2}\right)\leq \frac{1}{b-a}\int\limits^b_a u(y)dy \leq \frac{u(a)+u(b)}{2}.\end{equation}
 \end{itemize}
For a concave function $u$, the inequalities in \eqref{1.1} hold in the reversed direction. We note that Hadamard's inequality refines the concept of convexity,  and it  follows from Jensen's inequality. The classical Hermite-Hadamard inequality yields estimates for the mean value of a continuous convex function $u : [a, b] \rightarrow \mathbb{R}.$ The well-known inequalities dealing with the integral mean of a convex function $u$ are the Hermite-Hadamard inequalities or its weighted versions. They are also known as Hermite-Hadamard-Fej\'{e}r inequalities.

In \cite{F06}, Fej\'{e}r obtained the  weighted generalization of Hermite-Hadamard inequality \eqref{1.1} as follows.
\begin{itemize}
 \item Let $u:\, [a,b]\rightarrow \mathbb{R}$ be a convex function. Then the inequality
\begin{equation}\label{1.2}u\left(\frac{a+b}{2}\right)\int\limits^b_a w(y)dy\leq\int\limits^b_a u(y) w(y)dy \leq \frac{u(a)+u(b)}{2}\int\limits^b_a w(y)dy \end{equation} holds for a nonnegative, integrable function $v:\, [a,b]\rightarrow \mathbb{R}$, which is symmetric to $\frac{a+b}{2}.$
 \end{itemize}

In \cite{DA98}, Dragomir and Agarwal obtained the following results in connection with the right part of \eqref{1.1}:
\begin{itemize}
 \item Let $u:\, I\subseteq \mathbb{R}\rightarrow \mathbb{R}$ be a differentiable mapping on $I,a,b \in I.$ If $|u'|$ is convex on $[a,b],$ then the following inequality holds: \begin{equation}\label{1.3}\left|\frac{u(a)+u(b)}{2}-\frac{1}{b-a}\int\limits^b_a u(y)dy\right|\leq \frac{b-a}{8}\left(|u'(a)|+|u'(b)|\right).
\end{equation}
 \end{itemize}

In \cite{P03}, Pachpatte established two new Hermite-Hadamard type inequalities for products of convex functions as follows:
\begin{itemize}
 \item Let $u$ and $w$ be nonnegative and convex functions on $[a,b]\subset \mathbb{R},$ then \begin{equation}\label{1.4}\begin{aligned}\frac{1}{b-a}\int\limits^b_a u(y)w(y)dy \leq \frac{u(a)w(a)+u(b)w(b)}{3} + \frac{u(a)w(b)+u(b)w(a)}{6}
\end{aligned}\end{equation} and
\begin{equation}\begin{aligned} 2 &u\left(\frac{a+b}{2}\right) w\left(\frac{a+b}{2}\right) \leq \frac{1}{b-a}\int\limits^b_a u(y)w(y)dy \\& +\frac{u(a)w(a)+u(b)w(b)}{6}  + \frac{u(a)w(b)+u(b)w(a)}{3}.\label{1.5}
\end{aligned}\end{equation}\end{itemize}

Next we present some results on the generalization of aforementioned inequalities.

In \cite{SSYB13}, Sarikaya et. al. represented Hermite-Hadamard and Dragomir-Agarwal inequalities in fractional integral forms as follows.
\begin{itemize}
 \item Let $u:\,[a,b]\rightarrow \mathbb{R}$ be a positive function and $u\in L^1([a,b]).$ If $u$ is a convex function on $[a,b],$ then the following inequalities for fractional integrals hold
\begin{equation*}u\left(\frac{a+b}{2}\right)\leq \frac{\Gamma(\alpha+1)}{2(b-a)^\alpha}\left[I^\alpha_au(b)+I^\alpha_bu(a)\right] \leq \frac{u(a)+u(b)}{2}\end{equation*} with $\alpha>0.$
\end{itemize}
\begin{itemize}
 \item Let $u:\,[a,b]\rightarrow \mathbb{R}$ be a differentiable mapping on $(a, b).$ If $|u'|$ is convex on [a, b], then the following inequality for fractional integrals holds:
     \begin{multline*}\left|\frac{u(a)+u(b)}{2}-\frac{\Gamma(\alpha+1)}{2(b-a)^\alpha}\left[I^\alpha_au(b)+I^\alpha_bu(a)\right]\right|\\ \leq \frac{b-a}{2(\alpha+1)}(1-2^{-\alpha})\left(|u'(a)|+|u'(b)|\right).\end{multline*}
\end{itemize}
In \cite{I16}, I\c{s}can obtained the following Hermite-Hadamard-Fej\'{e}r integral inequalities via fractional integrals:
\begin{itemize}
 \item  Let $u:\,[a,b]\rightarrow \mathbb{R}$ be convex function with $a < b$ and $u \in L^1([a, b]).$ If $v:\,[a,b]\rightarrow \mathbb{R}$ is nonnegative, integrable and symmetric to $(a+b)/2,$ then the following inequalities for fractional integrals hold \begin{align*}u\left(\frac{a+b}{2}\right)\left[I^\alpha_av(b)+I^\alpha_bv(a)\right]&\leq \left[I^\alpha_a(uv)(b)+I^\alpha_b(uv)(a)\right]\\& \leq \frac{u(a)+u(b)}{2}\left[I^\alpha_av(b)+I^\alpha_bv(a)\right] \end{align*} with $\alpha>0.$
\end{itemize}
Many generalizations and extensions of the Hermite-Hadamard, Hermite-Hadamard-Fej\'{e}r, Dragomir-Agarwal and Pachpatte type inequalities were obtained for various classes of functions using fractional integrals; see \cite{BPP16, C16,  CK17, HYT14, ITM16, I16,  JS16, SSYB13, WLFZ12, ZW13} and references therein.

These studies motivated us to consider a new class of functional inequalities for convex functions generalizing the classical Hermite-Hadamard, Hermite-Hadamard-Fej\'{e}r, Dragomir-Agarwal and Pachpatte inequalities. Here we emphasize that we derive some functional inequalities for the new fractional integral operators with exponential kernel. The difference between our results and the known generalizations is that the above fractional analogues of functional inequalities do not follow from our results. In fact our results are the simplest generalizations of only classical inequalities.

The paper is organized as follows. Section \ref{Prel} contains some basic concepts related to our proposed study. In Section \ref{HH}, a Hermite-Hadamard type inequality for a fractional integral with an exponential kernel is proved. The fractional analogue of the Hermite inequality is investigated in Section \ref{HHF}. Section \ref{DA} is devoted to the generalization of Dragomir-Agarwal's inequality. In Section \ref{P}, we obtain generalized Pachpatte-type inequalities with fractional integrals in the class of convex functions.

\section{Preliminaries}\label{Prel}
We give some definitions for further use.
\begin{definition}\label{def1.1} A function $u : [a, b]\subset \mathbb{R} \rightarrow \mathbb{R}$ is said to be convex if $$u(\mu x+(1-\mu)y)\leq \mu u(x)+(1-\mu)u(y)$$ for all $x, y\in [a,b]$ and $\mu\in [0,1].$ We call $u$ a  concave function if $(-u)$ is convex. \end{definition}

Now we give some necessary concepts related to the new fractional integral which are used in the sequel.

\begin{definition}\label{def1.2} Let $f\in L_1(a,b).$ The fractional integrals $\mathcal{I}^\alpha_a$ and $\mathcal{I}^\alpha_b$ of order $\alpha\in (0,1)$ are defined by
\begin{equation} \label{I-1}
\mathcal{I}^\alpha_a u(x)=\frac{1}{\alpha}\int\limits^x_a \exp\left(-\frac{1-\alpha}{\alpha}(x-s)\right) u(s)ds,\, x>a
\end{equation}
and
\begin{equation}\label{I-2}
\mathcal{I}^\alpha_b u(x)=\frac{1}{\alpha}\int\limits^b_x \exp\left(-\frac{1-\alpha}{\alpha}(s-x)\right) u(s)ds,\, x<b
\end{equation}
 respectively.
\end{definition}

If $\alpha=1,$ then $$\lim_{\alpha\rightarrow 1}\mathcal{I}^\alpha_a u(x)=\int\limits^x_a u(s)ds,\,\, \lim_{\alpha\rightarrow 1}\mathcal{I}^\alpha_b u(x)=\int\limits^b_x u(s)ds.$$
Moreover, in view of
$$\lim_{\alpha\rightarrow 0} \frac{1}{\alpha}\exp\left(-\frac{1-\alpha}{\alpha}(x-s)\right)=\delta(x-s),$$
we deduce that
$$\lim_{\alpha\rightarrow 0}\mathcal{I}^\alpha_a u(x)=u(x),\,\, \lim_{\alpha\rightarrow 0}\mathcal{I}^\alpha_b u(x)=u(x).$$

\begin{definition} The left and right Riemann--Liouville
fractional integrals $I_{a} ^\alpha$ and $I_{b} ^\alpha$ of order $\alpha\in\mathbb R$ ($\alpha>0$) are given by
$$
I_{a} ^\alpha  \left[ f \right]\left( t \right) = {\rm{
}}\frac{1}{{\Gamma \left( \alpha \right)}}\int\limits_a^t {\left(
{t - s} \right)^{\alpha  - 1} f\left( s \right)} ds, \,\,\, t\in(a,b],
$$
and
$$ I_{b}
^\alpha  \left[ f \right]\left( t \right) = {\rm{
}}\frac{1}{{\Gamma \left( \alpha \right)}}\int\limits_t^b {\left(
{s - t} \right)^{\alpha  - 1} f\left( s \right)} ds, \,\,\, t\in[a,b),
$$
respectively. Here $\Gamma$ denotes the Euler gamma function.
\end{definition}
We henceforth set $\rho=\frac{1-\alpha}{\alpha}(b-a).$

\section{Hermite-Hadamard type inequality}\label{HH}

\begin{theorem}\label{th2.1} Let $u: [a, b]\rightarrow \mathbb{R}$ be a positive function with $0\leq a < b$ and $u \in L_1 (a, b).$ If $u$ is a convex function on $[a, b],$ then the following inequalities for fractional integrals \eqref{I-1} and \eqref{I-2} hold:
\begin{equation}\label{2.1} u\left(\frac{a+b}{2}\right)\leq \frac{1-\alpha}{2\left(1-\exp\left(-\rho\right)\right)}\left[\mathcal{I}^\alpha_a u(b)+\mathcal{I}^\alpha_b u(a)\right]\leq \frac{u(a)+u(b)}{2}.
\end{equation}
\end{theorem}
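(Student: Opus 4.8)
The plan is to collapse both fractional integrals into a single ordinary integral over the parameter $t\in[0,1]$ and then invoke convexity pointwise in $t$. First I would substitute $s=ta+(1-t)b$ in the definition of $\mathcal{I}^\alpha_a u(b)$. Since $b-s=t(b-a)$ and $ds=-(b-a)\,dt$, and recalling that $\mathcal{A}=\frac{1-\alpha}{\alpha}(b-a)$, this gives
\[
\mathcal{I}^\alpha_a u(b)=\frac{b-a}{\alpha}\int_0^1 e^{-\mathcal{A}t}\,u\bigl(ta+(1-t)b\bigr)\,dt .
\]
The same substitution in $\mathcal{I}^\alpha_b u(a)$, where now $s-a=(1-t)(b-a)$, followed by the reflection $t\mapsto 1-t$, yields
\[
\mathcal{I}^\alpha_b u(a)=\frac{b-a}{\alpha}\int_0^1 e^{-\mathcal{A}t}\,u\bigl((1-t)a+tb\bigr)\,dt .
\]
Adding these and using $\frac{(1-\alpha)(b-a)}{\alpha}=\mathcal{A}$, the middle expression in \eqref{2.1} becomes
\[
\frac{\mathcal{A}}{2\bigl(1-e^{-\mathcal{A}}\bigr)}\int_0^1 e^{-\mathcal{A}t}\Bigl[u\bigl(ta+(1-t)b\bigr)+u\bigl((1-t)a+tb\bigr)\Bigr]dt .
\]

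The key observation is that $\int_0^1 e^{-\mathcal{A}t}\,dt=\frac{1-e^{-\mathcal{A}}}{\mathcal{A}}$, so that $w(t):=\frac{\mathcal{A}e^{-\mathcal{A}t}}{1-e^{-\mathcal{A}}}$ is a probability density on $[0,1]$; consequently the middle expression is exactly the $w$-average of the bracketed quantity, and it suffices to estimate that bracket pointwise. For the left-hand inequality I would use the midpoint form of convexity from Definition~\ref{def1.1}: because $ta+(1-t)b$ and $(1-t)a+tb$ have midpoint $\frac{a+b}{2}$, we get $u(ta+(1-t)b)+u((1-t)a+tb)\ge 2u\!\left(\frac{a+b}{2}\right)$ for every $t$, and integrating against $w$ returns $u\!\left(\frac{a+b}{2}\right)$. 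For the right-hand inequality I would instead apply convexity in the two forms $u(ta+(1-t)b)\le tu(a)+(1-t)u(b)$ and $u((1-t)a+tb)\le (1-t)u(a)+tu(b)$, whose sum is the $t$-independent quantity $u(a)+u(b)$; integrating against $w$ returns $\frac{u(a)+u(b)}{2}$.

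There is no genuine obstacle; the work is entirely in the bookkeeping. The one point that needs care is tracking the two exponents ($b-s$ for $\mathcal{I}^\alpha_a$ versus $s-a$ for $\mathcal{I}^\alpha_b$) and the orientation of the limits under $s=ta+(1-t)b$, and then checking that the prefactor $\frac{1-\alpha}{2(1-e^{-\mathcal{A}})}$ times the Jacobian factor $\frac{b-a}{\alpha}$ from the change of variables collapses precisely to the normalization $\frac{\mathcal{A}}{2(1-e^{-\mathcal{A}})}$ that makes $w$ a density. As a sanity check, since $\mathcal{A}\to0$ and $\frac{\mathcal{A}}{1-e^{-\mathcal{A}}}\to1$ as $\alpha\to1$, the scheme degenerates to the classical argument for \eqref{1.1}; note also that positivity of $u$ and $0\le a<b$ are not actually used, only convexity on $[a,b]$ and $u\in L_1(a,b)$.
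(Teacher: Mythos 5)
Your argument is correct and is essentially the paper's own proof read in the opposite direction: the paper starts from the pointwise convexity inequalities at the symmetric points $ta+(1-t)b$ and $(1-t)a+tb$, multiplies by $\exp(-\mathcal{A}t)$, integrates over $[0,1]$, and then identifies the resulting integrals with $\mathcal{I}^\alpha_a u(b)$ and $\mathcal{I}^\alpha_b u(a)$ via the same change of variables you perform at the outset. Your observation that $w(t)=\mathcal{A}e^{-\mathcal{A}t}/(1-e^{-\mathcal{A}})$ is a probability density is a clean way to package the normalization, and your remark that positivity of $u$ and $0\le a<b$ are not needed is accurate, but the substance of the proof is the same.
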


\begin{proof} Since $u$ is a convex function on $[a,b],$ we get for $x$ and $y$ from $[a, b]$ with $\mu=\frac{1}{2}$ \begin{equation}\label{2.2}u\left(\frac{x+y}{2}\right) \leq \frac{u(x)+u(y)}{2},\end{equation} which, for $x=ta+(1-t)b,\, y=(1-t)a+tb,$ takes the form:
\begin{equation}\label{2.3}2u\left(\frac{a+b}{2}\right)\leq u(ta+(1-t)b)+u((1-t)a+tb).\end{equation} Multiplying both sides of \eqref{2.3} by $\exp\left(-\rho t\right)$ and  then integrating the resulting inequality with respect to $t$ over $[0,1],$ we obtain
\begin{align*}\frac{2\left(1-\exp\left(-\rho \right)\right)}{\rho}u\left(\frac{a+b}{2}\right) & \leq\int\limits^1_0 \exp\left(-\rho t\right) \left[u(ta+(1-t)b)+u((1-t)a+tb)\right]dt\\&= \int\limits^1_0 \exp\left(-\rho t\right) u(ta+(1-t)b)dt \\&+\int\limits^1_0 \exp\left(-\rho t\right)u((1-t)a+tb)dt\\&=\frac{1}{b-a}\int\limits^b_a \exp\left(-\frac{1-\alpha}{\alpha}(b-s)\right)u(s)ds\\&+ \frac{1}{b-a}\int\limits^b_a \exp\left(-\frac{1-\alpha}{\alpha}(s-a)\right)u(s)ds\\&= \frac{\alpha}{b-a}\left[\mathcal{I}^\alpha_a u(b)+ \mathcal{I}^\alpha_b u(a)\right].
\end{align*} As a result, we get
$$\frac{2\left(1-\exp\left(-\rho \right)\right)}{\mathcal{A}}u\left(\frac{a+b}{2}\right) \leq \frac{\alpha}{b-a}\left[\mathcal{I}^\alpha_a u(b)+ \mathcal{I}^\alpha_b u(a)\right].$$ Thus the first inequality of \eqref{2.1} is established.

For the proof of the second inequality in \eqref{2.1},  we first note that if $u$ is a convex function, then, for $t\in [0, 1],$ it yields
$$u(ta+(1-t)b)\leq tu(a)+(1-t)u(b)$$ and $$u((1-t)a+tb)\leq (1-t)u(a)+tu(b).$$ By adding the above two inequalities,  we have
\begin{equation}\label{2.4}u(ta+(1-t)b)+u((1-t)a+tb)\leq u(a)+u(b).
\end{equation} Multiplying both sides of \eqref{2.4} by $\exp\left(-\rho t \right)$ and integrating the resulting inequality with respect to $t$ over $[0,1],$ we obtain \begin{align*}\frac{2\left(1-\exp\left(-\rho \right)\right)}{\rho}\left[u(a)+u(b)\right] & \geq \int\limits^1_0 \exp\left(-\rho t\right) u(ta+(1-t)b)dt \\&+\int\limits^1_0 \exp\left(-\rho t\right)u((1-t)a+tb)dt,\end{align*} that is,  $$\frac{\alpha}{b-a}\left[\mathcal{I}^\alpha_a u(b)+ \mathcal{I}^\alpha_b u(a)\right]\leq \frac{2\left(1-\exp\left(-\rho \right)\right)}{\rho}\left[u(a)+u(b)\right].$$ Hence the second inequality in \eqref{2.1} is proved. This completes the proof of Theorem \ref{th2.1}.
\end{proof}

\begin{corollary}Let $u: [a, b]\rightarrow \mathbb{R}$ be a positive function with $0\leq a < b$ and $u \in L_1 (a, b).$ If $u$ is a concave function on $[a, b],$ then the following inequalities for fractional integrals \eqref{I-1}
and \eqref{I-2} hold:
\begin{align*} u\left(\frac{a+b}{2}\right)\geq \frac{1-\alpha}{2\left(1-\exp\left(-\rho \right)\right)}\left[\mathcal{I}^\alpha_a u(b)+\mathcal{I}^\alpha_b u(a)\right]\geq \frac{u(a)+u(b)}{2}.
\end{align*}
\end{corollary}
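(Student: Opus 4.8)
The plan is to obtain the corollary as a direct consequence of Theorem \ref{th2.1} by applying it to the function $-u$, using only the elementary fact that $u$ is concave on $[a,b]$ if and only if $-u$ is convex on $[a,b]$ (Definition \ref{def1.1}). First I would observe that the fractional integral operators $\mathcal{I}^\alpha_a$ and $\mathcal{I}^\alpha_b$ from Definition \ref{def1.2} are linear, so that $\mathcal{I}^\alpha_a(-u)(b) = -\mathcal{I}^\alpha_a u(b)$ and $\mathcal{I}^\alpha_b(-u)(a) = -\mathcal{I}^\alpha_b u(a)$; this is immediate since the exponential kernel does not depend on $u$ and integration is linear.

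Next I would check the hypotheses of Theorem \ref{th2.1} for $-u$. The statement of Theorem \ref{th2.1} requires the function to be positive; if one reads ``positive'' strictly this is a minor issue, since $-u$ need not be positive even when $u$ is. However, the positivity hypothesis is never actually used in the proof of Theorem \ref{th2.1}: only convexity, integrability, and $0 \leq a < b$ enter the argument. So I would either invoke Theorem \ref{th2.1} as stated (noting the positivity plays no role in its proof) or, more cleanly, apply the chain of inequalities \eqref{2.1} directly to $-u$, which is convex and lies in $L_1(a,b)$ whenever $u$ does.

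The core step is then purely algebraic: writing inequality \eqref{2.1} for $-u$ gives
\begin{equation*}
-u\left(\frac{a+b}{2}\right) \leq \frac{1-\alpha}{2\left(1-\exp\left(-\mathcal{A}\right)\right)}\left[-\mathcal{I}^\alpha_a u(b)-\mathcal{I}^\alpha_b u(a)\right] \leq \frac{-u(a)-u(b)}{2}.
\end{equation*}
Multiplying through by $-1$ reverses both inequalities and yields exactly the asserted chain
\begin{equation*}
u\left(\frac{a+b}{2}\right) \geq \frac{1-\alpha}{2\left(1-\exp\left(-\mathcal{A}\right)\right)}\left[\mathcal{I}^\alpha_a u(b)+\mathcal{I}^\alpha_b u(a)\right] \geq \frac{u(a)+u(b)}{2}.
\end{equation*}
I do not anticipate any real obstacle here: the only point requiring a moment's care is the positivity hypothesis in Theorem \ref{th2.1}, which I would handle by remarking that it is inessential to that theorem's proof, so that the concave version follows verbatim by negation.
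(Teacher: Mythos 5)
Your proposal is correct and is exactly the argument the paper intends: the corollary is stated without proof as an immediate consequence of Theorem \ref{th2.1} applied to the convex function $-u$, using linearity of $\mathcal{I}^\alpha_a$ and $\mathcal{I}^\alpha_b$ and reversing the inequalities by negation. Your remark that the positivity hypothesis is never used in the proof of Theorem \ref{th2.1} is a fair and careful observation that resolves the only point of friction.
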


\begin{remark}
For $\alpha \rightarrow 1,$ observe that
\begin{align*}\lim_{\alpha \rightarrow 1}\frac{1-\alpha}{2\left(1-\exp\left(-\rho \right)\right)}=\frac{1}{2(b-a)}.
\end{align*} Thus, Hermite-Hadamard inequality  \eqref{1.1} follows from Theorem \ref{th2.1} in the limit $\alpha \rightarrow 1.$
\end{remark}
\section{Hermite-Hadamard-Fej\'{e}r type inequality}\label{HHF}

\begin{theorem}\label{th3.1} Let $u:\,[a,b]\rightarrow \mathbb{R}$ be convex and integrable function with $a<b.$ If $w:\,[a,b]\rightarrow \mathbb{R}$ is nonnegative, integrable and symmetric with respect to $\frac{a+b}{2},$ that is,  $w(a+b-x)=w(x),$ then the following inequalities hold
\begin{multline}\label{3.1} u\left(\frac{a+b}{2}\right)\left[\mathcal{I}^{\alpha}_a w(b)+\mathcal{I}^{\alpha}_b w(a)\right] \leq \left[\mathcal{I}^{\alpha}_a \left(u w\right)(b)+\mathcal{I}^{\alpha}_b\left(u w\right)(a)\right]\\\leq \frac{u(a)+u(b)}{2}\left[\mathcal{I}^{\alpha}_a w(b)+\mathcal{I}^{\alpha}_b w(a)\right].
\end{multline}
\end{theorem}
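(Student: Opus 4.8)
The plan is to mimic the proof of Theorem~\ref{th2.1}, but now carrying the symmetric weight $v$ through the substitution. Recall the key identity already used there: for a function $w$,
\[
\frac{1}{b-a}\int\limits^b_a \exp\left(-\frac{1-\alpha}{\alpha}(b-s)\right)w(s)\,ds=\int\limits^1_0\exp\left(-\mathcal{A}t\right)w(ta+(1-t)b)\,dt
\]
and similarly with $(s-a)$ in the exponent and $w((1-t)a+tb)$ on the right. Applying this with $w=uv$ shows that
\[
\frac{\alpha}{b-a}\left[\mathcal{I}^{\alpha}_a(uv)(b)+\mathcal{I}^{\alpha}_b(uv)(a)\right]
\]
equals the integral over $[0,1]$ of $\exp(-\mathcal{A}t)$ times $\bigl[(uv)(ta+(1-t)b)+(uv)((1-t)a+tb)\bigr]$, and likewise for $\mathcal{I}^{\alpha}_a v(b)+\mathcal{I}^{\alpha}_b v(a)$ with $u$ deleted. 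So it suffices to prove the two inequalities after this translation to $t$-integrals.

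For the left inequality, start from \eqref{2.3}, namely $2u\left(\frac{a+b}{2}\right)\leq u(ta+(1-t)b)+u((1-t)a+tb)$, and multiply through by the nonnegative weight $v(ta+(1-t)b)\exp(-\mathcal{A}t)$ before integrating over $[0,1]$. The crucial point is that the symmetry $v(a+b-x)=v(x)$ gives $v(ta+(1-t)b)=v((1-t)a+tb)$, so after integration the right-hand side becomes $\int_0^1\exp(-\mathcal{A}t)\bigl[(uv)(ta+(1-t)b)+(uv)((1-t)a+tb)\bigr]\,dt$ while the left-hand side is $2u\left(\frac{a+b}{2}\right)\int_0^1\exp(-\mathcal{A}t)v(ta+(1-t)b)\,dt$. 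Translating both sides back through the identity above (and using that $\int_0^1\exp(-\mathcal{A}t)v(ta+(1-t)b)\,dt=\int_0^1\exp(-\mathcal{A}t)v((1-t)a+tb)\,dt$ again by symmetry) yields $u\left(\frac{a+b}{2}\right)\left[\mathcal{I}^{\alpha}_a v(b)+\mathcal{I}^{\alpha}_b v(a)\right]\leq\left[\mathcal{I}^{\alpha}_a(uv)(b)+\mathcal{I}^{\alpha}_b(uv)(a)\right]$, which is the first inequality in \eqref{3.1}.

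For the right inequality, start from \eqref{2.4}, i.e. $u(ta+(1-t)b)+u((1-t)a+tb)\leq u(a)+u(b)$, multiply through by $v(ta+(1-t)b)\exp(-\mathcal{A}t)\geq 0$, and integrate over $[0,1]$; on the left, the two terms recombine (again via $v(ta+(1-t)b)=v((1-t)a+tb)$) into $\int_0^1\exp(-\mathcal{A}t)\bigl[(uv)(ta+(1-t)b)+(uv)((1-t)a+tb)\bigr]\,dt$, and on the right we get $(u(a)+u(b))\int_0^1\exp(-\mathcal{A}t)v(ta+(1-t)b)\,dt$. Translating back gives $\left[\mathcal{I}^{\alpha}_a(uv)(b)+\mathcal{I}^{\alpha}_b(uv)(a)\right]\leq\frac{u(a)+u(b)}{2}\left[\mathcal{I}^{\alpha}_a v(b)+\mathcal{I}^{\alpha}_b v(a)\right]$, completing the proof.

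I expect the only delicate point to be the careful bookkeeping of which of $v(ta+(1-t)b)$ and $v((1-t)a+tb)$ appears after each substitution, and the repeated appeal to the symmetry of $v$ to identify the two weighted integrals $\int_0^1\exp(-\mathcal{A}t)v(ta+(1-t)b)\,dt$ and $\int_0^1\exp(-\mathcal{A}t)v((1-t)a+tb)\,dt$ with (a constant multiple of) $\mathcal{I}^{\alpha}_a v(b)+\mathcal{I}^{\alpha}_b v(a)$; once that identification is made, everything else is the same elementary convexity-and-integrate argument as in Theorem~\ref{th2.1}, and the factor $\frac{\alpha}{b-a}$ cancels uniformly from every term so it does not even appear in \eqref{3.1}.
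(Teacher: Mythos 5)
Your proposal is correct and follows essentially the same route as the paper: multiply the pointwise convexity inequalities \eqref{2.3} and \eqref{2.4} by the nonnegative weight $\exp(-\mathcal{A}t)$ times $v$ evaluated along the segment, integrate over $[0,1]$, convert to $s$-integrals, and invoke the symmetry of $v$ to identify $\mathcal{I}^{\alpha}_a v(b)=\mathcal{I}^{\alpha}_b v(a)=\frac{1}{2}\left[\mathcal{I}^{\alpha}_a v(b)+\mathcal{I}^{\alpha}_b v(a)\right]$. The only cosmetic difference is that you weight by $v(ta+(1-t)b)$ where the paper uses $v((1-t)a+tb)$, which are equal by the symmetry hypothesis, so the two arguments coincide.
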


\begin{proof}
Since $u$ is a convex function on $[a, b],$ we have  the inequality \eqref{2.3} for all $t\in [0; 1]$. Multiplying both sides of \eqref{2.3} by \begin{equation}\label{3.2}\exp\left(-\rho t \right)w\left((1-t)a+tb\right),\end{equation} and then integrating the resulting inequality with respect to $t$ over $[0, 1],$ we obtain
\begin{align*}2u\left(\frac{a+b}{2}\right)&\int\limits^1_0 \exp\left(-\rho t\right)w\left((1-t)a+tb\right) dt \\& \leq \int\limits^1_0 \exp\left(-\rho t\right) u\left(ta+(1-t)b\right) w\left((1-t)a+tb\right)dt\\& + \int\limits^1_0 \exp\left(-\rho t\right) u\left((1-t)a+tb\right) w\left((1-t)a+tb\right)dt\\& = \frac{1}{b-a}\int\limits^b_a \exp\left(-\frac{1-\alpha}{\alpha}(s-a)\right) u\left(a+b-s\right) w(s)ds\\& + \frac{1}{b-a}\int\limits^b_a \exp\left(-\frac{1-\alpha}{\alpha}(s-a)\right) u(s)w(s)ds\\&=\frac{1}{b-a}\int\limits^b_a \exp\left(-\frac{1-\alpha}{\alpha}(b-s)\right) u(s) w\left(a+b-s\right)ds \\&+ \frac{\alpha}{b-a}\mathcal{I}^\alpha_b \left[u(a)w(a)\right]=\frac{\alpha}{b-a}\left[\mathcal{I}^\alpha_a \left[u(a)w(a)\right]+\mathcal{I}^\alpha_b \left[u(a)w(a)\right]\right],
\end{align*}
that is,
\begin{multline*}2u\left(\frac{a+b}{2}\right)\int\limits^1_0 \exp\left(-\rho t\right)w\left((1-t)a+tb\right) dt \\ \leq \frac{\alpha}{b-a}\left[\mathcal{I}^\alpha_a \left[u(a)w(a)\right]+\mathcal{I}^\alpha_b \left[u(a)w(a)\right]\right].
\end{multline*}
Since $w$ is symmetric with respect to $\frac{a+b}{2},$ we have $$\mathcal{I}^\alpha_a w(b)=\mathcal{I}^\alpha_b w(a)=\frac{1}{2}\left[\mathcal{I}^\alpha_a w(b)+\mathcal{I}^\alpha_b w(a)\right].$$
Therefore, we have \begin{align*}u\left(\frac{a+b}{2}\right)\left[\mathcal{I}^\alpha_a w(b)+\mathcal{I}^\alpha_b w(a)\right] \leq\mathcal{I}^\alpha_a \left[w\left(b\right) u(b)\right]+\mathcal{I}^\alpha_b \left[w\left(a\right) u(a)\right].
\end{align*} This establishes the first inequality of Theorem \ref{th3.1}.

To prove the second inequality in \eqref{3.1}, we first notice that if $u$ is a convex
function, then, for all $t\in [0 1],$ it yields the inequality \eqref{2.4}. Multiplying both sides of \eqref{2.3} by \eqref{3.2} and integrating the resulting inequality with respect to $t$ over $[0, 1],$ we get \begin{align*}&\int\limits^1_0 \exp\left(-\rho t\right) u\left(ta+(1-t)b\right) w\left((1-t)a+tb\right)dt \\&+ \int\limits^1_0 \exp\left(-\rho t\right) u\left((1-t)a+tb\right)w\left((1-t)a+tb\right) dt \\& \leq \left[u(a)+u(b)\right]\int\limits^1_0 \exp\left(-\rho t\right)w\left((1-t)a+tb\right) dt.
\end{align*} In consequence, we obtain \begin{align*}\mathcal{I}^\alpha_a \left[w\left(b\right) u(b)\right]+\mathcal{I}^\alpha_b \left[w\left(a\right) u(a)\right] \leq \frac{u(a)+u(b)}{2} \left[\mathcal{I}^\alpha_a w(b)+\mathcal{I}^\alpha_b w(a)\right].\end{align*} Thus the proof of Theorem \ref{th3.1} is complete.
\end{proof}

\begin{corollary}Let $u:\,[a,b]\rightarrow \mathbb{R}$ be concave and integrable function with $a<b.$ If $w:\,[a,b]\rightarrow \mathbb{R}$ is nonnegative, integrable and symmetric with respect to $\frac{a+b}{2},$ that is,  $w(a+b-x)=w(x),$ then the following inequalities hold
\begin{align*}u\left(\frac{a+b}{2}\right)\left[\mathcal{I}^{\alpha}_a w(b)+\mathcal{I}^{\alpha}_b w(a)\right]& \geq \left[\mathcal{I}^{\alpha}_a \left(u w\right)(b)+\mathcal{I}^{\alpha}_b\left(u w\right)(a)\right]\\& \geq \frac{u(a)+u(b)}{2}\left[\mathcal{I}^{\alpha}_a w(b)+\mathcal{I}^{\alpha}_b w(a)\right].
\end{align*}
\end{corollary}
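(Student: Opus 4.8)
The plan is to obtain this corollary as an immediate consequence of Theorem~\ref{th3.1} applied to $-u$. First I would note that if $u$ is concave and integrable on $[a,b]$, then $-u$ is convex (Definition~\ref{def1.1}) and integrable, whereas the weight $v$ continues to satisfy all the hypotheses of Theorem~\ref{th3.1} (nonnegative, integrable, and symmetric with respect to $\frac{a+b}{2}$). Hence inequality \eqref{3.1} holds with $u$ replaced by $-u$:
\[
(-u)\left(\frac{a+b}{2}\right)\left[\mathcal{I}^{\alpha}_a v(b)+\mathcal{I}^{\alpha}_b v(a)\right] \leq \mathcal{I}^{\alpha}_a\bigl((-u)v\bigr)(b)+\mathcal{I}^{\alpha}_b\bigl((-u)v\bigr)(a) \leq \frac{(-u)(a)+(-u)(b)}{2}\left[\mathcal{I}^{\alpha}_a v(b)+\mathcal{I}^{\alpha}_b v(a)\right].
\]

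Next I would invoke the linearity of the operators $\mathcal{I}^{\alpha}_a$ and $\mathcal{I}^{\alpha}_b$, which is immediate from Definition~\ref{def1.2}, so that $\mathcal{I}^{\alpha}_a\bigl((-u)v\bigr)(b)=-\mathcal{I}^{\alpha}_a(uv)(b)$ and likewise for the $b$-endpoint term. Substituting these identities into the displayed chain and then multiplying the whole chain by $-1$ reverses both inequality signs and produces precisely the asserted reversed chain. Note that no sign assumption on the bracket $\left[\mathcal{I}^{\alpha}_a v(b)+\mathcal{I}^{\alpha}_b v(a)\right]$ is required at this stage, since the inequality is never multiplied through by that bracket.

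Alternatively, one can mimic the proof of Theorem~\ref{th3.1} verbatim: concavity of $u$ reverses the pointwise inequalities \eqref{2.3} and \eqref{2.4}, and since the multiplier $\exp\left(-\mathcal{A}t\right) v\left((1-t)a+tb\right)$ is nonnegative (this is exactly where the hypothesis $v\ge 0$ is used), integration over $[0,1]$ preserves the reversed inequalities; the same changes of variable identify the integrals with the fractional integrals, and the symmetry relation $\mathcal{I}^{\alpha}_a v(b)=\mathcal{I}^{\alpha}_b v(a)$ completes the argument. I expect no genuine obstacle here: the only points to check are that $-u$ inherits convexity and integrability and that the fractional integral operators are linear, both of which are routine.
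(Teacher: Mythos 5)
Your proposal is correct and is exactly the argument the paper intends: the corollary is stated without proof as an immediate consequence of Theorem \ref{th3.1} applied to the convex function $-u$, using the linearity of $\mathcal{I}^{\alpha}_a$ and $\mathcal{I}^{\alpha}_b$ to pull the minus sign out and reverse the chain. Your observation that Theorem \ref{th3.1} imposes no positivity on $u$ (so $-u$ is admissible) and that the bracket's sign is irrelevant is a correct and careful reading of the hypotheses.
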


\begin{remark}
From Theorem \ref{th3.1} with $\alpha \to 1,$ we indeed have Hermite-Hadamard-Fej\'{e}r inequality  \eqref{1.2}.
\end{remark}

\section{Dragomir-Agarwal type inequality}\label{DA}

\begin{theorem}\label{th4.1}
Let $u:\, I\subseteq \mathbb{R}\rightarrow \mathbb{R}$ be a differentiable mapping on $I,a,b \in I.$ If $|u'|$ is convex on $[a,b],$ then the following inequality involving  fractional integrals \eqref{I-1}
and \eqref{I-2} holds: \begin{multline}\label{4.1}\left|\frac{u(a)+u(b)}{2}-\frac{1-\alpha}{2\left(1-\exp\left(-\rho\right)\right)} \left[\mathcal{I}^\alpha_a u(b)+\mathcal{I}^\alpha_b u(a)\right]\right|\\ \leq \frac{b-a}{2\rho}\tanh\left(\frac{\rho}{4}\right)\left(|u'(a)|+|u'(b)|\right).
\end{multline}
\end{theorem}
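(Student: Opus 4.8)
The plan is to follow the classical Dragomir--Agarwal strategy: derive an integral identity expressing the quantity inside the absolute value as a weighted integral of $u'$ over $[0,1]$, then estimate using convexity of $|u'|$.

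First I would establish the key lemma, an analogue of the standard ``Montgomery-type'' identity for these exponential-kernel operators. Starting from the representation
$$\mathcal{I}^\alpha_a u(b)+\mathcal{I}^\alpha_b u(a)=\frac{b-a}{\alpha}\int\limits^1_0 \exp\left(-\mathcal{A}t\right)\bigl[u(ta+(1-t)b)+u((1-t)a+tb)\bigr]dt,$$
which is exactly the computation already carried out in the proof of Theorem~\ref{th2.1}, I would write
$$\frac{u(a)+u(b)}{2}-\frac{1-\alpha}{2(1-\exp(-\mathcal{A}))}\left[\mathcal{I}^\alpha_a u(b)+\mathcal{I}^\alpha_b u(a)\right]$$
$$=\frac{\mathcal{A}}{2(1-\exp(-\mathcal{A}))}\int\limits^1_0 \exp(-\mathcal{A}t)\Bigl[(u(a)+u(b))-u(ta+(1-t)b)-u((1-t)a+tb)\Bigr]dt,$$
using that $\int_0^1 \exp(-\mathcal{A}t)\,dt=(1-\exp(-\mathcal{A}))/\mathcal{A}$. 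Then I would integrate by parts each of the two terms in the bracket against $\exp(-\mathcal{A}t)\,dt$; writing $u(a)-u(ta+(1-t)b)=\int_t^1 (b-a)u'(sa+(1-s)b)\,ds$ and similarly for the other pair, and switching the order of integration (or integrating by parts directly), one arrives at an identity of the form
$$\frac{u(a)+u(b)}{2}-\frac{1-\alpha}{2(1-\exp(-\mathcal{A}))}\left[\mathcal{I}^\alpha_a u(b)+\mathcal{I}^\alpha_b u(a)\right]=\frac{b-a}{2}\int\limits^1_0 K(t)\,\bigl[u'(ta+(1-t)b)-u'((1-t)a+tb)\bigr]\,dt$$
for an explicit kernel $K(t)$; the natural candidate coming out of the integration by parts is
$$K(t)=\frac{\exp(-\mathcal{A}t)-\exp(-\mathcal{A}(1-t))}{2(1-\exp(-\mathcal{A}))}\quad\text{or an antiderivative thereof,}$$
and I would pin down its exact form by the requirement that the boundary terms vanish and the $t=1/2$ symmetry is respected.

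Next, taking absolute values and using $|u'(ta+(1-t)b)-u'((1-t)a+tb)|\le |u'(ta+(1-t)b)|+|u'((1-t)a+tb)|$ together with the convexity bounds $|u'(ta+(1-t)b)|\le t|u'(a)|+(1-t)|u'(b)|$ and $|u'((1-t)a+tb)|\le (1-t)|u'(a)|+t|u'(b)|$, the sum collapses to $|u'(a)|+|u'(b)|$, independent of $t$. Hence
$$\left|\frac{u(a)+u(b)}{2}-\frac{1-\alpha}{2(1-\exp(-\mathcal{A}))}\left[\mathcal{I}^\alpha_a u(b)+\mathcal{I}^\alpha_b u(a)\right]\right|\le \frac{b-a}{2}\left(\int\limits^1_0 |K(t)|\,dt\right)\bigl(|u'(a)|+|u'(b)|\bigr),$$
and it remains to evaluate $\int_0^1 |K(t)|\,dt$ and check it equals $\frac{1}{\mathcal{A}}\tanh(\mathcal{A}/4)$. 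By the antisymmetry $K(1-t)=-K(t)$, $K$ has a single sign change at $t=1/2$, so $\int_0^1|K(t)|\,dt=2\int_{1/2}^1 |K(t)|\,dt$ (or $2\int_0^{1/2}$), which is an elementary exponential integral; I would carry out that computation and use $1-\exp(-\mathcal{A})=2\exp(-\mathcal{A}/2)\sinh(\mathcal{A}/2)$ and $\sinh(\mathcal{A}/2)=2\sinh(\mathcal{A}/4)\cosh(\mathcal{A}/4)$ to simplify the result into the stated $\tanh(\mathcal{A}/4)$ form.

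The main obstacle I anticipate is getting the integration-by-parts bookkeeping exactly right so that the kernel $K(t)$ emerges in closed form with the boundary terms genuinely cancelling — the two terms $u(ta+(1-t)b)$ and $u((1-t)a+tb)$ contribute with reversed roles of the endpoints, and one must be careful that the $u(a)+u(b)$ constant is precisely the combination that kills the boundary contributions. Once the identity is in hand, the convexity estimate is routine and the only remaining work is the (elementary but slightly fiddly) evaluation of $\int_0^1|K(t)|\,dt$ and its trigonometric-hyperbolic simplification to $\frac{1}{\mathcal{A}}\tanh(\mathcal{A}/4)$. As a sanity check I would verify that letting $\alpha\to 1$ (so $\mathcal{A}\to 0$ and $\frac{1}{\mathcal{A}}\tanh(\mathcal{A}/4)\to \frac14$, while the prefactor $\frac{b-a}{2}$ combines to give $\frac{b-a}{8}$) recovers the classical Dragomir--Agarwal inequality \eqref{1.3}.
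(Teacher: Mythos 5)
Your proposal is correct and follows essentially the same route as the paper: the paper's proof rests on exactly the identity you describe (its equation \eqref{4.2}, with kernel $\exp(-\mathcal{A}t)-\exp(-\mathcal{A}(1-t))$, which your symmetrized form reproduces after the substitution $t\mapsto 1-t$), followed by the convexity bound on $|u'|$ and evaluation of the resulting exponential integrals, whose total indeed simplifies to $\frac{1}{\mathcal{A}}\tanh\left(\frac{\mathcal{A}}{4}\right)$ via $1-\exp(-\mathcal{A})=(1-\exp(-\mathcal{A}/2))(1+\exp(-\mathcal{A}/2))$. Your symmetrization, which makes the two convexity bounds collapse to $|u'(a)|+|u'(b)|$ before integrating, is a minor streamlining of the paper's computation of the four integrals $I_1,\dots,I_4$, but it is the same argument.
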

\begin{proof}
For $u'\in L_1(a,b),$ it is easy to find that
\begin{multline}\label{4.2}
\frac{u(a)+u(b)}{2}-\frac{1-\alpha}{2\left(1-\exp\left(-\rho\right)\right)}\left[\mathcal{I}^\alpha_b u(a)+\mathcal{I}^\alpha_a u(b)\right]\\ =\frac{b-a}{2\left(1-\exp\left(-\rho\right)\right)}\left\{\int\limits^1_0 \exp\left(-\rho t\right)u'\left(ta+(1-t)b\right)dt\right.\\ -\left.\int\limits^1_0 \exp\left(-\rho(1-t)\right)u'\left(ta+(1-t)b\right)dt\right\}.\end{multline}
Then, using \eqref{4.2} and the convexity of $|u'|,$ we obtain
\begin{align*}\left|\frac{u(a)+u(b)}{2}\right.& \left.-\frac{1-\alpha}{2\left(1-\exp\left(-\rho\right)\right)} \left[\mathcal{I}^\alpha_b u(a)+\mathcal{I}^\alpha_a u(b)\right]\right|\\& \leq \frac{b-a}{2}\int\limits^1_0 \frac{\left|\exp\left(-\rho t\right) -\exp\left(-\rho(1-t)\right)\right|}{1-\exp\left(-\rho\right)} \left|u'\left(ta+(1-t)b\right)\right|dt\\& \leq \frac{b-a}{2}\int\limits^1_0 \frac{\left|\exp\left(-\rho t\right) -\exp\left(-\rho(1-t)\right)\right|}{1-\exp\left(-\rho\right)} t\left|u'\left(a\right)\right|dt\\& +\frac{b-a}{2}\int\limits^1_0 \frac{\left|\exp\left(-\rho t\right) -\exp\left(-\rho(1-t)\right)\right|}{1-\exp\left(-\rho\right)} (1-t)\left|u'\left(b\right)\right|dt\\&= \frac{b-a}{2}\left|u'\left(a\right)\right|\int\limits^{\frac{1}{2}}_0 \frac{\exp\left(-\rho t\right) -\exp\left(-\rho(1-t)\right)}{1-\exp\left(-\rho\right)} t dt\\& + \frac{b-a}{2}\left|u'\left(a\right)\right|\int\limits^1_{\frac{1}{2}} \frac{\exp\left(-\rho(1-t)\right)-\exp\left(-\rho t\right)} {1-\exp\left(-\rho\right)} t dt\\& + \frac{b-a}{2}\left|u'\left(b\right)\right|\int\limits^{\frac{1}{2}}_0 \frac{\exp\left(-\rho t\right) -\exp\left(-\rho(1-t)\right)}{1-\exp\left(-\rho\right)} (1-t) dt\\&+ \frac{b-a}{2}\left|u'\left(b\right)\right|\int\limits^1_{\frac{1}{2}} \frac{\exp\left(-\rho (1-t)\right)- \exp\left(-\rho t\right)}{1-\exp\left(-\rho\right)} (1-t) dt\\& = \frac{b-a}{2\left(1-\exp\left(-\rho \right)\right)} \left[\left|u'\left(a\right)\right|\left(I_1+I_2\right) + \left|u'\left(b\right)\right|\left(I_3+I_4\right)\right].
\end{align*}
As a result, we get
\begin{multline}\label{4.3}\frac{u(a)+u(b)}{2}-\frac{1-\alpha}{2\left(1-\exp\left(-\rho\right)\right)}\left[\mathcal{I}^\alpha_b u(a)+\mathcal{I}^\alpha_a u(b)\right]\\ \leq \frac{b-a}{2\left(1-\exp\left(-\rho \right)\right)}\left[\left|u'\left(a\right)\right|\left(I_1+I_2\right) + \left|u'\left(b\right)\right|\left(I_3+I_4\right)\right],
\end{multline}
where
\begin{multline}\label{4.4}I_1=\int\limits^{\frac{1}{2}}_0 \left(\exp\left(-\rho t\right) -\exp\left(-\rho (1-t)\right)\right) t dt\\ =-\frac{\exp\left(-\frac{\rho}{2}\right)}{\rho}+ \frac{1}{\rho^2}\left(1 -\exp\left(-\rho\right)\right),
\end{multline}
\begin{multline}\label{4.5}I_2=\int\limits^1_{\frac{1}{2}} \left(\exp\left(-\rho(1-t)\right)-\exp\left(-\rho t\right)\right) t dt\\ =\frac{1}{\rho}\left(1 -\exp\left(-\frac{\rho}{2}\right)+\exp\left(-\rho\right)\right) -\frac{1}{\rho^2}\left(1 -\exp\left(-\rho \right)\right),
\end{multline}
\begin{multline}\label{4.6}I_3=\int\limits^{\frac{1}{2}}_0 \left(\exp\left(-\rho t\right) -\exp\left(-\rho (1-t)\right)\right) (1-t) dt\\ =-\frac{\exp\left(-\frac{\rho}{2}\right)} {\rho}+\frac{1}{\rho}\left(1+\exp\left(-\rho\right)\right) - \frac{1}{\rho^2}\left(1 -\exp\left(-\rho\right)\right)
\end{multline}
and
\begin{multline}\label{4.7}I_4=\int\limits^1_{\frac{1}{2}} \left(\exp\left(-\rho t\right)- \exp\left(-\rho (1-t)\right)\right) (1-t) dt\\ =-\frac{\exp\left(-\frac{\rho}{2}\right)}{\rho}+ \frac{1}{\rho^2}\left(1 -\exp\left(-\rho\right)\right).\end{multline}
Inserting the values of $I_i \, (i=1,2,3,4)$ given by  \eqref{4.4}-\eqref{4.7} in \eqref{4.3}, we obtain the inequality  \eqref{4.1}. This completes the proof.
\end{proof}

\begin{corollary}
Let $u:\, I\subseteq \mathbb{R}\rightarrow \mathbb{R}$ be a differentiable mapping on $I,a,b \in I.$ If $|u'|$ is concave on $[a,b],$ then the following inequality holds: \begin{multline*}\left|\frac{u(a)+u(b)}{2}-\frac{1-\alpha}{2\left(1-\exp\left(-\rho\right)\right)} \left[\mathcal{I}^\alpha_a u(b)+\mathcal{I}^\alpha_b u(a)\right]\right|\\ \geq \frac{b-a}{2\rho}\tanh\left(\frac{\rho}{4}\right)\left(|u'(a)|+|u'(b)|\right).
\end{multline*}
\end{corollary}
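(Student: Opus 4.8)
The plan is to retrace the proof of Theorem~\ref{th4.1} essentially verbatim, changing only the single place where the convexity of $|u'|$ is invoked. The opening identity~\eqref{4.2} needs nothing beyond $u'\in L_1(a,b)$, so it is available here unchanged: writing $\phi(t):=\exp\left(-\mathcal{A}t\right)-\exp\left(-\mathcal{A}(1-t)\right)$, it reads
\[
\frac{u(a)+u(b)}{2}-\frac{1-\alpha}{2\left(1-\exp\left(-\mathcal{A}\right)\right)}\left[\mathcal{I}^\alpha_b u(a)+\mathcal{I}^\alpha_a u(b)\right]=\frac{b-a}{2\left(1-\exp\left(-\mathcal{A}\right)\right)}\int\limits^1_0 \phi(t)\,u'\left(ta+(1-t)b\right)dt .
\]
First I would record that, since $|u'|$ is concave on $[a,b]$, the chord estimate reverses: $\left|u'\left(ta+(1-t)b\right)\right|\geq t\,|u'(a)|+(1-t)\,|u'(b)|$ for all $t\in[0,1]$. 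Inserting $|\phi|$ together with this bound produces a \emph{lower} estimate for $\int_0^1|\phi(t)|\,\left|u'\left(ta+(1-t)b\right)\right|dt$ of the shape $|u'(a)|\left(I_1+I_2\right)+|u'(b)|\left(I_3+I_4\right)$, where $I_1,\dots,I_4$ are exactly the four integrals evaluated in~\eqref{4.4}--\eqref{4.7}; consequently the right-hand side of the claimed inequality is already computed for us and equals $\dfrac{b-a}{2\mathcal{A}}\tanh\!\left(\dfrac{\mathcal{A}}{4}\right)\bigl(|u'(a)|+|u'(b)|\bigr)$ after the same simplification used at the end of Theorem~\ref{th4.1}.

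The step I expect to be the genuine obstacle is passing from that lower estimate for $\int_0^1|\phi|\,|u'|\,dt$ to the \emph{lower} estimate for $\left|\tfrac{u(a)+u(b)}{2}-\tfrac{1-\alpha}{2(1-\exp(-\mathcal{A}))}\left[\mathcal{I}^\alpha_a u(b)+\mathcal{I}^\alpha_b u(a)\right]\right|$ that the statement demands: in the convex case one uses the triangle inequality $\bigl|\int\phi\,u'\bigr|\le\int|\phi|\,|u'|$, which here points the wrong way. To circumvent this I would exploit the antisymmetry $\phi(1-t)=-\phi(t)$ about $t=\tfrac12$, rewriting $\int_0^1\phi(t)\,u'\left(ta+(1-t)b\right)dt=\int_0^{1/2}\phi(t)\bigl[u'\left(ta+(1-t)b\right)-u'\left((1-t)a+tb\right)\bigr]dt$, and then use the sign of $\phi$ on $[0,\tfrac12]$ together with the ordering information that concavity of $|u'|$ imposes on the pair of symmetric points $ta+(1-t)b$ and $(1-t)a+tb$, so as to compare $\bigl|\int_0^1\phi\,u'\bigr|$ with $\int_0^1|\phi|\bigl(t\,|u'(a)|+(1-t)\,|u'(b)|\bigr)dt$ in the required direction. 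Once that comparison is secured, substituting~\eqref{4.4}--\eqref{4.7} and collecting terms finishes the argument exactly as for Theorem~\ref{th4.1}; the remaining computations are the routine ones already carried out there.

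In short, the skeleton (identity~\eqref{4.2}, chord estimate, explicit integrals $I_1,\dots,I_4$) transfers mechanically; the only non-automatic ingredient is the correct handling of the absolute value in the passage from the integral bound to the bound on $\left|\tfrac{u(a)+u(b)}{2}-\cdots\right|$ once the inequality sense has been flipped, and that is the point on which the proof stands or falls.
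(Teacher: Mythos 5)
You have correctly located the critical difficulty, but the step you defer (``once that comparison is secured'') cannot be secured: the corollary as stated is false, so no completion of your sketch exists. Identity \eqref{4.2} expresses the bracketed quantity as $\frac{b-a}{2(1-\exp(-\mathcal{A}))}\int_0^1 \phi(t)\,u'(ta+(1-t)b)\,dt$ with $\phi(t)=\exp(-\mathcal{A}t)-\exp(-\mathcal{A}(1-t))$, and $\phi$ is antisymmetric about $t=\tfrac12$. Hence for $u(x)=x$ --- for which $|u'|\equiv 1$ is concave --- the integral equals $\int_0^1\phi(t)\,dt=0$, so the left-hand side of the claimed inequality vanishes, while the right-hand side equals $\frac{b-a}{\mathcal{A}}\tanh\left(\frac{\mathcal{A}}{4}\right)>0$. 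This counterexample defeats any argument, not just yours: concavity of $|u'|$ does reverse the chord estimate and does yield the lower bound $\int_0^1|\phi(t)|\,|u'(ta+(1-t)b)|\,dt\ge |u'(a)|(I_1+I_2)+|u'(b)|(I_3+I_4)$, but there is no inequality in the needed direction between $\bigl|\int_0^1\phi\,u'\,dt\bigr|$ and $\int_0^1|\phi|\,|u'|\,dt$ that could transfer this lower bound to the absolute value on the left. Your proposed use of the antisymmetry of $\phi$ only rewrites the integral as $\int_0^{1/2}\phi(t)\bigl[u'(ta+(1-t)b)-u'((1-t)a+tb)\bigr]\,dt$, and concavity of $|u'|$ places no lower bound on that difference of derivative values (it is identically zero for $u(x)=x$), so the comparison you need cannot be extracted from the hypotheses.

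For what it is worth, the paper offers no proof of this corollary either: it is stated as a formal reversal of Theorem \ref{th4.1}, obtained by flipping the inequality sign. The obstacle you flagged as ``the point on which the proof stands or falls'' is therefore not a gap in your execution but a defect in the statement itself; the reversal is legitimate only for the quantity $\int_0^1|\phi(t)|\,|u'(ta+(1-t)b)|\,dt$, not for the absolute value of the trapezoid-type difference on the left of \eqref{4.1}.
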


\begin{remark}
For $\alpha \rightarrow 1,$ we find that
\begin{align*}\lim_{\alpha \rightarrow 1}\frac{1-\alpha}{2\left(1-\exp\left(-\rho \right)\right)}=\frac{1}{2(b-a)},
\end{align*}
\begin{align*}\lim_{\alpha \rightarrow 1}\frac{b-a}{2\rho}\tanh\left(\frac{\rho}{4}\right)=\frac{b-a}{8}.
\end{align*} Thus we get Dragomir-Agarwal inequality  \eqref{1.3}  from Theorem \ref{th4.1} when  $\alpha \rightarrow 1.$
\end{remark}

\section{Pachpatte type inequalities}\label{P}
\begin{theorem}\label{th5.1}
Let $u$ and $w$ be real-valued, nonnegative and convex functions on $[a, b].$ Then the following inequalities involving  fractional integrals \eqref{I-1}
and \eqref{I-2} hold:
\begin{multline}\label{5.1}\frac{\alpha}{2(b-a)}\left[\mathcal{I}^\alpha_a\left(u(b)w(b)\right)+\mathcal{I}^\alpha_b\left(u(a)w(a)\right)\right]\\ \leq \left[u(a)w(a)+ u(b)w(b)\right]\frac{{\rho}^2-2 \rho+ 4- \left({\rho}^2+2 \rho+4\right)\exp\left(-\rho \right)}{2\rho^3}\\ +\left[u(a)w(b)+u(b)w(a)\right]\frac{\rho-2+ \exp\left(-\rho \right)\left(\rho+2\right)}{\rho^3},
\end{multline}
\begin{multline}\label{5.2}2u\left(\frac{a+b}{2}\right)w\left(\frac{a+b}{2}\right) \leq \frac{1-\alpha}{2\left(1-\exp\left(-\rho\right)\right)}\left[\mathcal{I}^{\alpha}_a u(b)w(b)+\mathcal{I}^{\alpha}_b u(a)w(a)\right]\\ + \left[u(a)w(a)+u(b)w(b)\right]\frac{\rho-2+ \exp\left(-\rho\right)\left(\rho +2\right)} {\rho^2\left(1-\exp\left(-\rho \right)\right)}\\ +\left[u(a)w(b)+u(b)w(a)\right]\frac{{\rho}^2-2 \rho+ 4- \left({\rho}^2+2 \rho+4\right)\exp\left(-\rho\right)} {2\rho^2\left(1-\exp\left(-\rho\right)\right)}.
\end{multline}
\end{theorem}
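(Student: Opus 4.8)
The plan is to establish both \eqref{5.1} and \eqref{5.2} by the elementary averaging device already used for Theorems~\ref{th2.1} and \ref{th3.1}: write convexity along the segment $[a,b]$ in the parametrization $t\mapsto ta+(1-t)b$, multiply by the kernel $\exp(-\mathcal{A}t)$, integrate in $t$ over $[0,1]$, and recognize the fractional integrals $\mathcal{I}^\alpha_a$, $\mathcal{I}^\alpha_b$ after the substitutions $s=ta+(1-t)b$ and $s=(1-t)a+tb$, exactly as in the proof of Theorem~\ref{th2.1}.

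For \eqref{5.1}, I would first apply Definition~\ref{def1.1} to $u$ and to $v$ at $ta+(1-t)b$, getting $u(ta+(1-t)b)\le tu(a)+(1-t)u(b)$ and $v(ta+(1-t)b)\le tv(a)+(1-t)v(b)$; since $u,v\ge 0$ these may be multiplied, yielding $u(ta+(1-t)b)v(ta+(1-t)b)\le t^2u(a)v(a)+(1-t)^2u(b)v(b)+t(1-t)[u(a)v(b)+u(b)v(a)]$. Writing the same bound with $t$ replaced by $1-t$ and adding, the diagonal coefficient becomes $t^2+(1-t)^2$ and the cross coefficient $2t(1-t)$. Multiplying through by $\exp(-\mathcal{A}t)$ and integrating over $[0,1]$, the left-hand side turns into $\frac{\alpha}{b-a}\bigl[\mathcal{I}^\alpha_a(uv)(b)+\mathcal{I}^\alpha_b(uv)(a)\bigr]$ by the two substitutions, while the right-hand side reduces to the two elementary moments $\int_0^1\exp(-\mathcal{A}t)[t^2+(1-t)^2]\,dt$ and $\int_0^1\exp(-\mathcal{A}t)\,2t(1-t)\,dt$, which by integration by parts equal $\frac{\mathcal{A}^2-2\mathcal{A}+4-(\mathcal{A}^2+2\mathcal{A}+4)\exp(-\mathcal{A})}{\mathcal{A}^3}$ and $\frac{2(\mathcal{A}-2+(\mathcal{A}+2)\exp(-\mathcal{A}))}{\mathcal{A}^3}$ respectively. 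Dividing by $2$ gives precisely \eqref{5.1}.

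For \eqref{5.2}, I would start from the identity $\frac{a+b}{2}=\tfrac12(ta+(1-t)b)+\tfrac12((1-t)a+tb)$, so convexity gives $u(\tfrac{a+b}{2})\le\tfrac12[u(ta+(1-t)b)+u((1-t)a+tb)]$ and similarly for $v$; multiplying these (using nonnegativity) and expanding, $4u(\tfrac{a+b}{2})v(\tfrac{a+b}{2})$ is bounded by $u(ta+(1-t)b)v(ta+(1-t)b)+u((1-t)a+tb)v((1-t)a+tb)$ plus the two mixed products $u(ta+(1-t)b)v((1-t)a+tb)$ and $u((1-t)a+tb)v(ta+(1-t)b)$. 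For the two mixed products I would again insert the linear convexity bounds at the endpoints and multiply out; their sum is dominated by $2t(1-t)[u(a)v(a)+u(b)v(b)]+[t^2+(1-t)^2][u(a)v(b)+u(b)v(a)]$. Multiplying by $\exp(-\mathcal{A}t)$ and integrating over $[0,1]$: the left constant becomes $\frac{4(1-\exp(-\mathcal{A}))}{\mathcal{A}}$, the first two terms give $\frac{\alpha}{b-a}[\mathcal{I}^\alpha_a(uv)(b)+\mathcal{I}^\alpha_b(uv)(a)]$, and the remaining terms reuse the two moments above. Dividing by $\frac{2(1-\exp(-\mathcal{A}))}{\mathcal{A}}$ and using $\frac{\alpha}{b-a}\,\mathcal{A}=1-\alpha$ collapses everything to \eqref{5.2}.

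The only real work is bookkeeping. The main obstacle, such as it is, is the accurate evaluation of the two parametric moments $\int_0^1\exp(-\mathcal{A}t)t^2\,dt$ and $\int_0^1\exp(-\mathcal{A}t)t(1-t)\,dt$ and then the careful tracking of the normalizing constants: the powers $\mathcal{A},\mathcal{A}^2,\mathcal{A}^3$ produced by the integrations by parts, the factor $\frac{\alpha}{b-a}$ coming from Definition~\ref{def1.2}, and the identity $\mathcal{A}=\frac{1-\alpha}{\alpha}(b-a)$, which is what makes the powers of $(b-a)$ and $\alpha$ cancel and leaves exactly the stated constants. No regularity or convergence issues arise, since convex functions on the compact interval $[a,b]$ are automatically continuous and bounded there.
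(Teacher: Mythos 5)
Your proposal is correct and follows essentially the same route as the paper: the same pointwise convexity products giving the coefficients $t^2+(1-t)^2=2t^2-2t+1$ and $2t(1-t)$, the same kernel-weighted integration with the substitutions recognizing $\mathcal{I}^\alpha_a$ and $\mathcal{I}^\alpha_b$, and the same two exponential moments, whose values you state correctly. The only difference is cosmetic bookkeeping (you clear the factor of $4$ at the start of the proof of \eqref{5.2} rather than carrying $\tfrac14$ through as the paper does).
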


\begin{proof}
Since $u$ and $w$ are convex on $[a, b],$ then, for $\xi\in [0, 1],$ it follows from definition \ref{def1.1} that
\begin{align*}u\left(\xi a+(1-\xi)b\right)w\left(\xi a+(1-\xi)b\right)& \leq \xi^2 u(a)w(a)+ (1-\xi)^2u(b)w(b)\\&+\xi(1-\xi)\left[u(a)w(b)+u(b)w(a)\right]
\end{align*}
and
\begin{align*} u\left((1-\xi)a+\xi b\right)w\left((1-\xi)a+\xi b\right)& \leq (1-\xi)^2 u(a)w(a)+ \xi^2u(b)w(b)\\& +\xi(1-\xi)\left[u(a)w(b)+u(b)w(a)\right].
\end{align*}
Consequently, we have
\begin{equation}\label{5.3}\begin{aligned} u\left(\xi a+(1-\xi)b\right)w\left(\xi a+(1-\xi)b\right)& +u\left((1-\xi)a+tb\right)w\left((1-\xi)a+\xi b\right)\\& \leq (2\xi^2-2 \xi+1) \left[u(a)w(a)+ u(b)w(b)\right]\\& +2t(1-\xi)\left[u(a)w(b)+u(b)w(a)\right].\end{aligned}
\end{equation}
Multiplying both sides of inequality \eqref{5.3} by $\exp\left(-\rho \xi \right)$ and integrating the resulting inequality with respect to $\xi \in [0, 1],$ we obtain
\begin{multline*}\int\limits^1_0 \exp\left(-\rho \xi \right) u\left(\xi a+(1-\xi)b\right)w\left(\xi a+(1-\xi)b\right)d \xi \\ +\int\limits^1_0 \exp\left(-\rho \xi \right)u\left((1-\xi)a+\xi b\right)w\left((1-\xi)a+\xi b\right)d \xi \\ =\frac{\alpha}{b-a}\left[\mathcal{I}^\alpha_a\left(u(b)w(b)\right)+ \mathcal{I}^\alpha_b\left(u(a)w(a)\right)\right]\\ \leq \left[u(a)w(a)+ u(b)w(b)\right]\int\limits^1_0 \exp\left(-\rho \xi \right)(2\xi^2-2\xi+1)d \xi \\ +\left[u(a)w(b)+u(b)w(a)\right]\int\limits^1_0 \exp\left(-\rho \xi \right)2\xi(1-\xi)d \xi\\ =\left[u(a)w(a)+ u(b)w(b)\right]\frac{{\rho}^2-2 \rho+ 4- \left({\rho}^2+2 \rho+4\right)\exp\left(-\rho\right)}{\rho^3}\\ +2\left[u(a)w(b)+u(b)w(a)\right]\frac{\rho-2+ \exp\left(-\rho\right)\left(\rho+2\right)}{\rho^3}.
\end{multline*}

So
\begin{multline*}\frac{\alpha}{2(b-a)}\left[\mathcal{I}^\alpha_a\left(u(b)w(b)\right)+\mathcal{I}^\alpha_b\left(u(a)w(a)\right)\right]\\ \leq \left[u(a)w(a)+ u(b)w(b)\right]\frac{{\rho}^2-2 \rho+ 4- \left({\rho}^2+2 \rho+4\right)\exp\left(-\rho \right)}{2\rho^3}\\ +\left[u(a)w(b)+u(b)w(a)\right] \frac{\rho-2+\exp\left(-\rho \right)\left(\rho+2\right)}{\rho^3},
\end{multline*} which completes the proof of \eqref{5.1}.

Next we establish the inequality \eqref{5.2}. Again using convexity of the functions $u$ and $v$ on $[a,b],$ we have
\begin{align*}& u\left(\frac{a+b}{2}\right)w\left(\frac{a+b}{2}\right)\\& = u\left(\frac{\xi a+(1-\xi)b}{2}+\frac{(1-\xi)a+\xi b}{2}\right) w\left(\frac{\xi a+(1-\xi)b}{2}+\frac{(1-\xi)a+\xi b}{2}\right)\\& \leq \left(\frac{u\left(\xi a+(1-\xi)b\right)+u\left((1-\xi)a+\xi b\right)}{2}\right) \left(\frac{w\left(ta+(1-\xi)b\right)+w\left((1-\xi)a+\xi b\right)}{2}\right)\\& \leq\frac{u\left(\xi a+(1-\xi)b\right)w\left(\xi a+(1-\xi)b\right)}{4} +\frac{u\left((1-\xi)a+\xi b\right)w\left((1-\xi)a+\xi b\right)}{4}\\& +\frac{\xi(1-\xi)}{2}\left[u(a)w(a)+u(b)w(b)\right] +\frac{(2\xi^2-2\xi+1)}{4}\left[u(a)w(b)+u(b)w(a)\right].
\end{align*}
Thus
\begin{equation}\label{5.4} \begin{aligned} &u\left(\frac{a+b}{2}\right)w\left(\frac{a+b}{2}\right)\\ & \leq\frac{u\left(\xi a+(1-\xi)b\right)w\left(\xi a+(1-\xi)b\right)}{4} +\frac{u\left((1-\xi)a+\xi b\right)w\left((1-\xi)a+\xi b\right)}{4}\\& +\frac{t(1-\xi)}{2}\left[u(a)w(a)+u(b)w(b)\right] +\frac{(2\xi^2-2\xi+1)}{4}\left[u(a)w(b)+u(b)w(a)\right].
\end{aligned}\end{equation} Multiplying both sides of \eqref{5.4} by $\exp\left(-\rho \xi \right)$ and then integrating the resulting inequality with respect to $t \in [0, 1],$ we have
\begin{align*}&\frac{1-\exp\left(-\rho \right)} {\rho}u\left(\frac{a+b}{2}\right)w\left(\frac{a+b}{2}\right)\\& \leq \int\limits^1_0 \exp\left(-\rho \xi \right)\frac{u\left(\xi a+(1-\xi)b\right)w\left(\xi a+(1-\xi)b\right)}{4}d \xi\\& +\int\limits^1_0 \exp\left(-\rho \xi \right)\frac{u\left((1-\xi)a+\xi b\right)w\left((1-\xi)a+\xi b\right)}{4}d \xi\\& +\int\limits^1_0 \exp\left(-\rho \xi\right) \frac{\xi(1-\xi)}{2}\left[u(a)w(a)+u(b)w(b)\right]d \xi\\& +\int\limits^1_0 \exp\left(-\rho \xi \right)\frac{2\xi^2-2\xi+1}{4}\left[u(a)w(b)+u(b)w(a)\right]d \xi\\& =\frac{\alpha}{4(b-a)}\left[\mathcal{I}^{\alpha}_a u(b)w(b)+\mathcal{I}^{\alpha}_b u(a)w(a)\right]\\& + \left[u(a)w(a)+u(b)w(b)\right]\frac{\rho-2+ \exp\left(-\rho\right)\left(\rho+2\right)}{2\rho^3}\\ &+\left[u(a)w(b)+u(b)w(a)\right]\frac{{\rho}^2-2 \rho+ 4- \left({\rho}^2+2 \rho+4\right)\exp\left(-\rho\right)}{4\rho^3},
\end{align*}
which can alternatively be written as
\begin{align*}& u\left(\frac{a+b}{2}\right)w\left(\frac{a+b}{2}\right)\\& \leq\frac{1-\alpha}{4\left(1-\exp\left(-\rho\right)\right)}\left[\mathcal{I}^{\alpha}_a u(b)w(b)+\mathcal{I}^{\alpha}_b u(a)w(a)\right]\\& + \left[u(a)w(a)+u(b)w(b)\right]\frac{\rho-2+ \exp\left(-\rho\right)\left(\rho+2\right)} {2\rho^2\left(1-\exp\left(-\rho\right)\right)}\\ &+\left[u(a)w(b)+u(b)w(a)\right]\frac{{\rho}^2-2 \rho+ 4- \left({\rho}^2+2 \rho+4\right) \exp\left(-\rho\right)}{4\rho^2\left(1-\exp\left(-\rho\right)\right)}.
\end{align*}
This completes the proof.
\end{proof}

\begin{corollary}
Suppose that  $u$ and $w$ are real-valued, nonnegative and concave functions on $[a, b].$ Then the following inequalities hold:
\begin{multline*}\frac{\alpha}{2(b-a)}\left[\mathcal{I}^\alpha_a\left(u(b)w(b)\right)+ \mathcal{I}^\alpha_b\left(u(a)w(a)\right)\right]\\ \geq \left[u(a)w(a)+ u(b)w(b)\right]\frac{{\rho}^2-2 \rho+ 4- \left({\rho}^2+2 \rho+4\right)\exp\left(-\rho\right)}{2\rho^3}\\ +\left[u(a)w(b)+u(b)w(a)\right]\frac{\rho-2+ \exp\left(-\rho \right)\left(\rho+2\right)}{\rho^3},
\end{multline*}
\begin{multline*} 2 u\left(\frac{a+b}{2}\right)w\left(\frac{a+b}{2}\right) \geq \frac{1-\alpha}{2\left(1-\exp\left(-\rho\right)\right)}\left[\mathcal{I}^{\alpha}_a u(b)w(b)+\mathcal{I}^{\alpha}_b u(a)w(a)\right]\\ + \left[u(a)w(a)+u(b)w(b)\right]\frac{\rho-2+ \exp\left(-\rho\right)\left(\rho+2\right)} {\rho^2\left(1-\exp\left(-\rho\right)\right)}\\ +\left[u(a)w(b)+u(b)w(a)\right]\frac{{\rho}^2-2 \rho+ 4- \left({\rho}^2+2 \rho+4\right)\exp\left(-\rho\right)} {2\rho^2\left(1-\exp\left(-\rho\right)\right)}.
\end{multline*}
\end{corollary}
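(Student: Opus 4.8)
The plan is to mirror the proof of Theorem~\ref{th5.1} step for step, reversing every inequality that originates from convexity, and to use the nonnegativity of $u$ and $v$ to guarantee that products of reversed inequalities retain their direction. Since $u$ and $v$ are concave, applying Definition~\ref{def1.1} to $-u$ and $-v$ gives, for each $t\in[0,1]$,
$$u(ta+(1-t)b)\geq tu(a)+(1-t)u(b)\geq 0,\qquad v(ta+(1-t)b)\geq tv(a)+(1-t)v(b)\geq 0,$$
where nonnegativity of the right-hand sides follows from $u(a),u(b),v(a),v(b)\geq 0$. Because both factors are nonnegative and both bounds hold with $\geq$, the product satisfies
$$u(ta+(1-t)b)\,v(ta+(1-t)b)\geq\bigl(tu(a)+(1-t)u(b)\bigr)\bigl(tv(a)+(1-t)v(b)\bigr),$$
and similarly at the point $(1-t)a+tb$. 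Adding the two reversed estimates reproduces \eqref{5.3} with $\leq$ replaced by $\geq$.

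First I would establish the reversed first inequality. Multiplying the reversed \eqref{5.3} by the positive weight $\exp(-\mathcal{A}t)$ and integrating over $[0,1]$ preserves the sense of the inequality, so the very same quadratic integrals $\int_0^1 e^{-\mathcal{A}t}(2t^2-2t+1)\,dt$ and $\int_0^1 e^{-\mathcal{A}t}\,2t(1-t)\,dt$ appear, evaluating to exactly the closed forms already recorded in the proof of \eqref{5.1}. Identifying the left-hand integral with $\frac{\alpha}{b-a}\bigl[\mathcal{I}^\alpha_a(u(b)v(b))+\mathcal{I}^\alpha_b(u(a)v(a))\bigr]$ as before and dividing by $2$ yields the first displayed inequality of the corollary.

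For the second inequality I would reverse the midpoint step. Concavity of $u$ and $v$ gives $u(\frac{a+b}{2})\geq\frac{u(X)+u(Y)}{2}\geq 0$ and $v(\frac{a+b}{2})\geq\frac{v(X)+v(Y)}{2}\geq 0$, with $X=ta+(1-t)b$ and $Y=(1-t)a+tb$; multiplying these two nonnegative reversed inequalities gives
$$u\!\left(\tfrac{a+b}{2}\right)v\!\left(\tfrac{a+b}{2}\right)\geq\frac{\bigl(u(X)+u(Y)\bigr)\bigl(v(X)+v(Y)\bigr)}{4}.$$
The cross terms $u(X)v(Y)+u(Y)v(X)$ must now be bounded from below, which is supplied by the reversed product estimates above; this reproduces \eqref{5.4} with $\leq$ replaced by $\geq$. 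Multiplying by $\exp(-\mathcal{A}t)$, integrating, and using the same four elementary integrals then delivers the second inequality of the corollary.

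The main obstacle—indeed the only point requiring genuine care—is the monotonicity of multiplication: one must verify that two reversed inequalities between nonnegative quantities combine into a reversed inequality for their product, that is, $A\geq A'\geq 0$ and $B\geq B'\geq 0$ force $AB\geq A'B'$ (via $AB\geq A'B\geq A'B'$). This is exactly where the hypothesis that $u$ and $v$ are nonnegative becomes indispensable; without it the sign of the product estimate could not be controlled. Every subsequent computation is identical to that of Theorem~\ref{th5.1}, since the weight $\exp(-\mathcal{A}t)$ and all polynomial integrands are unchanged and the integration preserves the reversed ordering throughout.
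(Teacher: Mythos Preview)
Your proposal is correct and follows precisely the route the paper intends: the corollary is stated immediately after Theorem~\ref{th5.1} without proof, so the implicit argument is exactly the sign-reversal of that proof which you carry out, and your explicit use of nonnegativity to justify multiplying the reversed inequalities (the step $A\ge A'\ge 0,\ B\ge B'\ge 0\Rightarrow AB\ge A'B'$) is the one genuine point that must be checked and that the paper leaves unspoken.
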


\begin{remark} Using the limiting values
\begin{align*}\lim_{\alpha \rightarrow 1}\frac{1-\alpha}{2\left(1-\exp\left(-\rho\right)\right)}=\frac{1}{2(b-a)}, \, \,
\lim_{\alpha \rightarrow 1}\frac{\rho-2+\exp\left(-\rho\right)\left(\rho+2\right)}{\rho^3}=\frac{1}{6},
\end{align*}
\begin{align*}
\lim_{\alpha \rightarrow 1}\frac{{\rho}^2-2 \rho+ 4- \left({\rho}^2+2 \rho+4\right)\exp\left(-\rho\right)} {2\rho^2\left(1-\exp\left(-\rho\right)\right)}=\frac{1}{3},
\end{align*} we obtain Pachpatte inequalities \eqref{1.4} and \eqref{1.5} from Theorem \ref{th5.1} when $\alpha \to 1.$
\end{remark}

\section*{Discussions and Conclusions} We obtained the generalization of the Hermite-Hadamard, HermiteHadamard-Fej\'{e}r, Dragomir-Agarwal and Pachpatte type inequalities for new fractional integral operators with exponential kernel. As an immediate consequence of the results derived in this paper, one can obtain similar inequalities for the following fractional integrals with Mittag-Leffler nonsingular kernel:\\
\begin{equation*}
\mathfrak{I}^\alpha_a u(x)=\frac{1}{\alpha}\int\limits^x_a E_{\alpha,1}\left(-\frac{1-\alpha}{\alpha}(x-s)^\alpha\right) u(s)ds,\, x>a
\end{equation*}
and
\begin{equation*}
\mathfrak{I}^\alpha_b u(x)=\frac{1}{\alpha}\int\limits^b_x E_{\alpha,1}\left(-\frac{1-\alpha}{\alpha}(s-x)^\alpha\right) u(s)ds,\, x<b
\end{equation*}
for $f\in L_1(a,b)$ and $\alpha\in (0,1).$ Here $E_{\alpha,\mu}\left(z\right)$ is the
Mittag-Leffler type function: $$E_{\alpha,\mu}\left(z\right)=\sum\limits_{k=0}^{\infty}
\frac{z^k}{\Gamma\left(\alpha k+\mu\right)}.$$
Moreover, we believe that the present work would serve as a strong motivation for the fellow researchers to enhance/enrich similar known literature on the related topics.

\section*{Acknowledgements}
The research of Torebek is financially supported by a grant from the Ministry of Science and Education of the Republic of Kazakhstan (Grants No.AP05131756). The authors gratefully acknowledge the referee for his/her useful comments that led to the improvement of the original manuscript.

\end{document}